\documentclass[12pt]{article}
\usepackage[T1]{fontenc}
\usepackage[english]{babel}
\usepackage{mathrsfs}
\usepackage{amssymb}	
\usepackage{amsthm}
\usepackage{xcolor}
\usepackage{amsmath}
\usepackage{multirow}
\usepackage{nccmath}
\usepackage{rotating}
\usepackage{subcaption}
\usepackage{hyperref}
\usepackage{multicol}

\usepackage[normalem]{ulem}
		
\usepackage[left = 2cm, right = 2cm, bottom = 2cm, top = 2cm]{geometry}

\newcommand{\weg}[1]{}

\theoremstyle{plain}
\newtheorem{Theorem}{Theorem}

\newtheorem{Definition}{Definition} 

\newtheorem{Proposition}{Proposition}

\usepackage{graphicx}
\graphicspath{ {./images/} }

\title{On three-dimensional $\operatorname{gl}$-regular Nijenhuis operators}
\author{Dinmukhammed Akpan\footnote{Institute of Mathematics, Friedrich Schiller University Jena, 07743 Jena, Germany, Institute of Mathematics and Mathematical Modeling, Almaty, Kazakhstan, dinmukhammed.akpan@uni-jena.de}, Matteo Manenti\footnote{University of Antwerp, Belgium, matteo.manenti@uantwerpen.be}}

\begin{document}
\maketitle
\begin{abstract}
 The paper discusses the classification of three-dimensional gl-regular Nijenhuis operators and their normal forms. We also prove A.~Bolsinov's conjecture.
\end{abstract}

\noindent
\textbf{Keywords:} Nijenhuis geometry, Nijenhuis operators, $\operatorname{gl}$-regular operators, normal forms

\section*{Introduction}

Let $M$ be a smooth manifold, and let $L$ be an operator field (a tensor field of type $(1,1)$) on it. Then $L$ is called a \emph{Nijenhuis operator} if  
\begin{equation*}
    \mathcal{N}_{L}(\xi,\eta):={L}^2[\xi,\eta]+[{L}\xi,{L}\eta]-{L}[{L}\xi,\eta]-{L}[\xi,{L}\eta] \equiv 0,
\end{equation*}
where $\xi$ and $\eta$ are vector fields and $[\cdot,\cdot]$ denotes their commutator. Note that the expression $\mathcal{N}_{L}(\xi,\eta)$ is called the \emph{Nijenhuis torsion of $L$} and was first introduced by Albert Nijenhuis in \cite{nijenhuis1951xn}. Recently, Nijenhuis operators and their applications have been actively studied in a series of works by A.~Bolsinov, A.~Konyaev, and V.~Matveev \cite{bolsinov2022nijenhuisNGI, konyaev2021nijenhuisNGII, bolsinov2024nijenhuisNGIII, bolsinov2024nijenhuisNGIV}.

Let  
\begin{equation} \label{eq: char pol gen}
    \chi(t)=\det(t\operatorname{Id}-L)=t^n-\sigma_1 t^{n-1}-\cdots-\sigma_{n-1}t-\sigma_n
\end{equation}
be the characteristic polynomial of an operator $L$. In \cite{bolsinov2022nijenhuisNGI} it was proved that if $L$ is a Nijenhuis operator, then in any  coordinate system $x=(x^1,\ldots,x^n)$ the following relation holds:
\begin{equation}\label{ngi form}
    J L = L_{\operatorname{comp1}} J, \quad 
    L_{\operatorname{comp1}} = \begin{pmatrix}
        \sigma_1(x) & 1 & 0 & \ldots & 0 \\
        \sigma_2(x) & 0 & 1 & \ldots & 0 \\
        \ldots & \ldots & \ldots & \ldots & \ldots \\
        \sigma_{n-1}(x) & 0 & 0 & \ldots & 1 \\
        \sigma_n(x) & 0 & 0 & \ldots & 0
    \end{pmatrix},
\end{equation}
where $J$ is the Jacobian matrix of the characteristic map given by the invariants of the operator:
\[
\sigma : M^n \mapsto \mathbb{R}^n, \quad  \
\sigma(x) = (\sigma_1(x),\ldots,\sigma_n(x)).
\]
Thus, if the Jacobian $J$ is invertible almost everywhere, then the Nijenhuis operator $L$ is recovered by the formula
\begin{equation}\label{reconst form}
    L = J^{-1} L_{\operatorname{comp1}} J.
\end{equation}

\begin{Definition}[\cite{bolsinov2024nijenhuisNGIII}]
    An operator $L:\mathbb{R}^n \to \mathbb{R}^n$ is called \textit{$\operatorname{gl}$-regular} if for each eigenvalue of the operator $L$ in its Jordan normal form there is exactly one Jordan block. A Nijenhuis operator $L$ defined on a smooth manifold $M^n$ is called \textit{$\operatorname{gl}$-regular} if it is $\operatorname{gl}$-regular at every point $p \in M^n$.
\end{Definition}

Let $\chi_L(\lambda)$ be the characteristic polynomial of a real-analytic $\operatorname{gl}$-regular operator $L$, given by formula \eqref{eq: char pol gen}. Then the following conditions are equivalent (see \cite{bolsinov2024nijenhuisNGIII}):
\begin{itemize}
    \item $L$ is a Nijenhuis operator;
    \item there exists a coordinate system $x=(x^1,\ldots,x^n)$ in which the operator takes the form $L_{\operatorname{comp1}}$.
    Moreover, the functions $\sigma_i$ satisfy the system of partial differential equations
    \begin{equation}\label{eq: gl-reg}
        \partial_{x^i}
       \left(\sigma^T \right)
        = L_{\operatorname{comp1}} \;
        \partial_{x^{i+1}}
       \left(\sigma^T \right),
        \qquad i=1,\ldots,n-1.
    \end{equation}
\end{itemize}

System \eqref{eq: gl-reg} is compatible and written in Cauchy form. Since all objects are assumed to be everywhere real-analytic, by the Cauchy–Kovalevskaya theorem, for any analytic initial conditions
\[
\sigma_i(0,\ldots,0,x^n)=\sigma_i^0(x^n),\qquad i=1,\ldots,n,
\]
there exists a unique analytic solution.

If $L$ is a $\operatorname{gl}$-regular Nijenhuis operator whose algebraic type does not change in the neighbourhood under consideration, then $L$ has one of the forms presented in Table \ref{table: generic}.

\begin{table}[htbp]
\centering
\renewcommand{\arraystretch}{1.6}
\setlength{\tabcolsep}{8pt}

\begin{tabular}{|p{0.45\textwidth}|p{0.45\textwidth}|}
\hline

\parbox{0.45\textwidth}{
\centering\small
Three distinct real eigenvalues
\[
L=\begin{pmatrix}
\lambda_1(x) & 0 & 0\\
0 & \lambda_2(y) & 0\\
0 & 0 & \lambda_3(z)
\end{pmatrix},
\qquad \lambda_i\neq\lambda_j.
\]
}
&
\parbox{0.45\textwidth}{
\centering\small
One real and two complex conjugate eigenvalues
\[
L=\begin{pmatrix}
a(x,y) & -b(x,y) & 0\\
b(x,y) & a(x,y) & 0\\
0 & 0 & \lambda(z)
\end{pmatrix}
\]
where $a(x,y)+ib(x,y)=\mu(x+iy)$ is holomorphic and $b\neq0$.
}
\\
\hline

\parbox{0.45\textwidth}{
\centering\small Jordan block of size two and one simple eigenvalue
\[
L=\begin{pmatrix}
\lambda(y) & 1 & 0\\
0 & \lambda(y) & 0\\
0 & 0 & \mu(z)
\end{pmatrix},
\qquad \lambda\neq\mu.
\]
}
&
\parbox{0.45\textwidth}{
\centering\small
Jordan cell of size three
\[
L=\begin{pmatrix}
\lambda(x) & 0 & 0\\
1 & \lambda(x) & 0\\
-\lambda' z & 1 & \lambda(x)
\end{pmatrix},
\qquad
\lambda'=\frac{d\lambda(x)}{dx}.
\]
}
\\
\hline

\end{tabular}
\caption{Normal forms for a Nijenhuis operator in a neighbourhood of a generic points.}
\label{table: generic}
\end{table}

We consider the operator in a neighbourhood of a singular point, that is, where the eigenvalues of the operator coincide, and without loss of generality we may assume that they are equal to zero at this point; otherwise we pass to the operator $L \to L - \lambda_0 \operatorname{Id}$ and then assume that this point is the origin.

Note that in \cite{bolsinov2024nijenhuisNGIII} a complete classification of two-dimensional $\operatorname{gl}$-regular Nijenhuis operators in a neighbourhood of a singular point was obtained.

\section*{Main part}

A complete classification of three-dimensional $\operatorname{gl}$-regular Nijenhuis operators is not yet known, but we present some results obtained in this direction.

In what follows, we assume that $(x,y,z)$ is a coordinate system in which the operator is written in the form $L_{\operatorname{comp1}}$ and the characteristic polynomial of a three-dimensional $\operatorname{gl}$-regular Nijenhuis operator has the form
\begin{equation}\label{eq: char pol}
    \chi(t) = t^3 - f(x,y,z)t^2 - g(x,y,z)t - h(x,y,z).
\end{equation}

Moreover, without loss of generality, we consider a neighbourhood of the origin, which is a singular point. In terms of the coefficients of the characteristic polynomial, this condition reads
\[
f(0,0,0)=g(0,0,0)=h(0,0,0)=0.
\]
In our notation, system \eqref{eq: gl-reg} takes the form
\begin{equation}\label{eq:NijenhuisCond}
    \begin{cases}
        \partial_x \begin{pmatrix} f \\ g \\ h \end{pmatrix}
        = L_{\operatorname{comp1}} \; \partial_y \begin{pmatrix} f \\ g \\ h \end{pmatrix},
        \\[2ex]
        \partial_y \begin{pmatrix} f \\ g \\ h \end{pmatrix}
        = L_{\operatorname{comp1}} \; \partial_z \begin{pmatrix} f \\ g \\ h \end{pmatrix}
    \end{cases}
    \quad\Longleftrightarrow\quad
    \begin{cases}
        f_x = f f_y + g_y,\\
        g_x = g f_y + h_y,\\
        h_x = h f_y,\\[2ex]
        f_y = f f_z + g_z,\\
        g_y = g f_z + h_z,\\
        h_y = h f_z.
    \end{cases}
\end{equation}
Denote by
\[
f(0,0,z)=f_0(z), \qquad
g(0,0,z)=g_0(z), \qquad
h(0,0,z)=h_0(z)
\]
the initial data for system \eqref{eq:NijenhuisCond}.

First, consider the cases where one of the invariants of the operator $L$ is identically zero. These are degenerate cases, since the system of invariants becomes functionally dependent everywhere. We present these results without proof, as they follow directly from the analysis of system \eqref{eq:NijenhuisCond}.

Let $J_0$ denote the nilpotent Jordan block, i.e.
\[
J_0 = \begin{pmatrix}
    0 & 1 & 0 & \ldots & 0 \\
    0 & 0 & 1 & \ldots & 0 \\
    \ldots & \ldots & \ldots & \ldots & \ldots \\
    0 & 0 & 0 & \ldots & 1 \\
    0 & 0 & 0 & \ldots & 0
\end{pmatrix}.
\]

\begin{Proposition} \label{st: tr L = 0}
    Let $L$ be a $\operatorname{gl}$-regular Nijenhuis operator (of arbitrary dimension) in a neighbourhood of the origin and suppose $L(\mathbf{0})$ is conjugate to the nilpotent Jordan block $J_0$. Assume $\operatorname{tr} L \equiv 0$. Then all other coefficients of the characteristic polynomial are also identically zero, and the operator $L$ reduces to the form $J_0$ in a whole neighbourhood of the origin.
\end{Proposition}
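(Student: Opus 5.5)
We will work in the coordinates $x=(x^1,\ldots,x^n)$ in which $L=L_{\operatorname{comp1}}$, with $\sigma_i$ satisfying system \eqref{eq: gl-reg}. Since $L(\mathbf 0)$ is conjugate to $J_0$, all eigenvalues vanish at the origin, so $\sigma_i(\mathbf 0)=0$ for all $i$. The hypothesis $\operatorname{tr}L=\sigma_1\equiv 0$ kills the first column's top entry and all derivatives of $\sigma_1$. We then read \eqref{eq: gl-reg} componentwise. The first component of $\partial_{x^i}\sigma^T = L_{\operatorname{comp1}}\partial_{x^{i+1}}\sigma^T$ is
\[
\partial_{x^i}\sigma_1=\sigma_1\,\partial_{x^{i+1}}\sigma_1+\partial_{x^{i+1}}\sigma_2,
\]
so $\partial_{x^{i+1}}\sigma_2\equiv 0$ for $i=1,\ldots,n-1$. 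Thus $\sigma_2$ depends only on $x^1$.

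Next we propagate the argument down the rows, by induction on $k$. The $k$-th component reads
\[
\partial_{x^i}\sigma_k=\sigma_k\,\partial_{x^{i+1}}\sigma_1+\partial_{x^{i+1}}\sigma_{k+1}=\partial_{x^{i+1}}\sigma_{k+1},\qquad k<n,
\]
and for $k=n$ it reads $\partial_{x^i}\sigma_n=\sigma_n\,\partial_{x^{i+1}}\sigma_1=0$. The inductive claim is that $\sigma_k$ depends only on $x^1,\ldots,x^{k-1}$ (in a suitable triangular sense). From the last equation, $\sigma_n$ depends only on $x^n$. Working downward from $k=n$ might be cleaner: $\partial_{x^{i+1}}\sigma_n=\partial_{x^i}\sigma_{n-1}$, and $\partial_{x^i}\sigma_n=0$ for $i\le n-1$. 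Together these force strong constraints, and the linear system with $\sigma_1=0$ becomes $\partial_{x^i}\sigma^T=J_0^T\partial_{x^{i+1}}\sigma^T$ with constant coefficients, where $J_0^T$ denotes the lower shift.

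The cleanest route is to use the Cauchy--Kovalevskaya uniqueness already quoted. The solution is determined by the initial data $\sigma_k(0,\ldots,0,x^n)$. With constant nilpotent coefficients, the system is explicitly solvable: $\partial_{x^i}=J_0^T\partial_{x^{i+1}}$ gives $\partial_{x^i}\sigma^T=(J_0^T)^{n-i}\partial_{x^n}\sigma^T$. The first component of this relation with $i=n-1$ yields $\partial_{x^n}\sigma_2\equiv0$ on the initial line, and iterating yields derivatives of $\sigma_2,\ldots,\sigma_n$ along $x^n$ in terms of each other.

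The key remaining point, which we expect to be the main obstacle, is to show that the initial data themselves vanish. We first determine the general solution $\sigma(x)=\sigma^0$ evaluated along a combination of the variables with nilpotent corrections. We then impose gl-regularity: $L_{\operatorname{comp1}}$ is always gl-regular, so that condition gives nothing. The real input must therefore be that the coordinates $x$ exist, i.e.\ the Jacobian relation $JL=L_{\operatorname{comp1}}J$ combined with the coordinate change being a diffeomorphism. Concretely, we would compute that $\sigma_n$ depends only on $x^n$ while $\partial_{x^{n-1}}\sigma_n=\partial_{x^n}\sigma_{n+1}$ with $\sigma_{n+1}:=0$. Tracing the chain $\partial_{x^{i}}\sigma_k=\partial_{x^{i+1}}\sigma_{k+1}$ up from $k=n$ shows that $\partial_{x^n}\sigma_n=\partial_{x^{n-1}}\sigma_{n-1}=\cdots=\partial_{x^{1}}\sigma_1\cdot(\ldots)$, which vanishes. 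This gives $\partial_{x^j}\sigma_k=0$ for all $j,k$. Hence every $\sigma_k$ is constant, equal to its value $0$ at the origin, so $L=L_{\operatorname{comp1}}$ with zero first column, which is exactly $J_0$ up to the transpose convention. The main work is checking that this diagonal chain of index shifts closes up to reach $\sigma_1$ for every pair $(j,k)$, using only indices in range. Where the chain runs out of range at the boundary, we would use the component equation for $\sigma_n$ together with analyticity and induction on $n-k$.
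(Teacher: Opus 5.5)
Your closing argument is the right one, and it is exactly the ``direct analysis of the system'' that the paper appeals to (the paper states this proposition without proof). With $\sigma_1\equiv 0$ the companion-form system reads $\partial_{x^i}\sigma_k=\partial_{x^{i+1}}\sigma_{k+1}$ for $1\le k\le n-1$ and $\partial_{x^i}\sigma_n=0$, for $i=1,\dots,n-1$. So $a_{jk}:=\partial_{x^j}\sigma_k$ is constant along every diagonal $j-k=c$ of the $n\times n$ index square.

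What you call ``the main work'', and suspect might need analyticity and an induction on $n-k$, is a one-line check. If $c\ge 0$, the diagonal contains $(c+1,1)$, and $a_{c+1,1}=\partial_{x^{c+1}}\sigma_1=0$. If $c<0$, it contains $(n+c,n)$ with $1\le n+c\le n-1$, and $a_{n+c,n}=\partial_{x^{n+c}}\sigma_n=0$ by the last row. So every $\sigma_k$ is constant, equal to $\sigma_k(\mathbf 0)=0$. Then $L=L_{\operatorname{comp1}}$ with zero first column, which is literally the paper's $J_0$ (no transpose). That completes the proof.

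The rest of the proposal should be dropped rather than repaired. There is no separate obstacle of showing that the initial data vanish. Cauchy--Kovalevskaya, analyticity and the fact that the coordinate change is a diffeomorphism play no role, and gl-regularity is used only to have companion coordinates. Some intermediate statements are also off:
the reduced constant-coefficient matrix is $J_0$ itself (the upper shift, since $(L_{\operatorname{comp1}}v)_k=\sigma_kv_1+v_{k+1}$), not $J_0^T$;
and the ``inductive claim'' that $\sigma_k$ depends only on $x^1,\dots,x^{k-1}$ is neither proved nor needed.

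If you want to avoid index bookkeeping altogether, use the rows of $JL=L_{\operatorname{comp1}}J$, which hold in any coordinates: $L^*d\sigma_k=\sigma_k\,d\sigma_1+d\sigma_{k+1}$, with $d\sigma_{n+1}:=0$. Then $\sigma_1\equiv0$ gives $d\sigma_{k+1}=L^*d\sigma_k$, hence $d\sigma_k=0$ for all $k$ by induction. Companion coordinates are then needed only for the final identification $L=J_0$.
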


\begin{Proposition}
    Let $L$ be a real-analytic three-dimensional $\operatorname{gl}$-regular Nijenhuis operator and let $\sigma_2 \equiv 0$ in formula \eqref{eq: char pol gen}. Then $\det L \equiv 0$ and
    \begin{itemize}
        \item either $\operatorname{tr}L \equiv 0$ and the operator reduces to $J_0$;
        \item or, in suitable coordinates $(\tilde{x}, \tilde{y}, \tilde{z})$, the operator $L$ is given by the matrix
        \[
            \begin{pmatrix}
                0 & 1 & 0 \\
                0 & 0 & 1 \\
                0 & 0 & \varepsilon \tilde{z}^d
            \end{pmatrix},
        \]
        for some $d \in \mathbb{N}$ and $\varepsilon = \pm 1$.
    \end{itemize}
\end{Proposition}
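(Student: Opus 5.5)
Throughout we work in the coordinates $(x,y,z)$ in which $L=L_{\operatorname{comp1}}$, and we use system \eqref{eq:NijenhuisCond} with $g\equiv 0$. The system then reads
\[
f_x = f f_y,\quad 0 = h_y,\quad h_x = h f_y,\qquad f_y = f f_z,\quad 0 = h_z,\quad h_y = h f_z .
\]
The plan is to derive $h\equiv 0$ first, then reduce to an ODE in $z$, and finally use coordinate changes to normalize the remaining one-variable function.

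\textbf{Step 1: $\det L\equiv 0$.} From $h_y=h_z=0$ we get $h=h(x)$. Then $h_y = hf_z$ gives $hf_z\equiv 0$, and $h_x=hf_y=hf f_z$ gives $h'(x)=0$, so $h$ is constant. Since $h(0)=0$, we conclude $h\equiv 0$.

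\textbf{Step 2: reduction to one function.} What remains is $f_y=ff_z$ and $f_x=ff_y=f^2f_z$. By Cauchy--Kovalevskaya, $f$ is determined by $f_0(z)=f(0,0,z)$, and $f_0(0)=0$. If $f_0\equiv 0$ then $f\equiv 0$, hence $\operatorname{tr}L\equiv 0$, and Proposition~\ref{st: tr L = 0} gives the form $J_0$. Otherwise, by analyticity, $f_0(z)=c z^d(1+O(z))$ with $c\ne 0$ and $d\ge 1$.

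\textbf{Step 3: normal form.} The target matrix is exactly $L_{\operatorname{comp1}}$ with $\sigma_1=\varepsilon\tilde z^d$ and $\sigma_2=\sigma_3=0$, written in the coordinates $(\tilde z,\tilde y,\tilde x)$ order reversed. More concretely, it is a companion-type form in which the basis vector $\partial_{\tilde z}$ is an eigenvector with eigenvalue $\varepsilon\tilde z^d$. Note also that $\varepsilon\tilde z^d$ itself solves the reduced system as a function of $\tilde z$ alone, because the system is then trivially consistent in the chosen coordinates.

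The strategy is as follows. The eigenvalue function $f$ is the nonzero eigenvalue of $L$, and $df$ is a conservation-type differential: from \eqref{ngi form}, $L^*df=f\,df$. So I will find coordinates $(\tilde x,\tilde y,\tilde z)$ such that $f=\varepsilon\tilde z^d$ and $L$ has the displayed matrix. For this:
\begin{itemize}
\item Use the kernel of $L$ and the generalized kernel; since $L$ is gl-regular with $\det L=0$ and $\sigma_2=0$, the characteristic polynomial is $t^2(t-f)$.
\item Build a flag of distributions $\ker L\subset\ker L^2$ together with the eigenvector field of $f$.
\end{itemize}

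The main obstacle is proving that this decomposition is simultaneously rectifiable even at the singular set $f=0$, where the eigenvalues collide.

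To handle it, the first thing I would try is to work directly with the explicit structure of $L_{\operatorname{comp1}}$. I would look for a diffeomorphism of the form $\tilde z=\phi(x,y,z)$ with $\phi^d=\varepsilon f$; such a root exists analytically because $f_0=cz^d(1+O(z))$ and $f$ restricted to each line is controlled by it. Then I would choose $\tilde x,\tilde y$ as functions whose differentials span the annihilator conditions $L^*d\tilde x=0$ and $L^*d\tilde y=d\tilde x$ (equivalently $d\tilde x$ in the kernel of $L^*$ and $d\tilde y$ in the next generalized kernel), with coefficients adjusted so the matrix becomes exactly the displayed one. Concretely, $L^*$ acting on covectors is the transpose of $L_{\operatorname{comp1}}$ with $g=h=0$. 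Its kernel is spanned by $dz$-type combinations, and I would check that the Nijenhuis condition makes these kernel covector fields closed (after a suitable functional rescaling), so that they integrate to $\tilde x,\tilde y$. Finally, $\varepsilon=\operatorname{sign}(c)$ if $d$ is even; if $d$ is odd, the sign can be absorbed, but the statement allows either value anyway.
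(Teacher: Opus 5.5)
Your Steps 1 and 2 are correct, given $h(0)=0$ from the standing singular-point convention. The gap is Step 3, which starts from a misreading of the target matrix.

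Its rows say $L^*d\tilde x=d\tilde y$, $L^*d\tilde y=d\tilde z$ and $L^*d\tilde z=\varepsilon\tilde z^d\,d\tilde z$, so it is a \emph{second} companion form. Reversing the coordinates in $L_{\operatorname{comp1}}$ gives $\bigl(\begin{smallmatrix}0&0&0\\1&0&0\\0&1&\sigma_1\end{smallmatrix}\bigr)$, which is the transpose of the target. It is $d\tilde z$, not $\partial_{\tilde z}$, that is an eigen(co)vector, since $L\partial_{\tilde z}=\partial_{\tilde y}+\varepsilon\tilde z^d\partial_{\tilde z}$. Nor does $\varepsilon\tilde z^d$ solve the reduced system: no nonconstant function of only one of $x,y,z$ satisfies both $f_y=ff_z$ and $f_x=f^2f_z$.

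More seriously, the conditions you then impose cannot hold for any coordinate system near the origin. You ask for $L^*d\tilde x=0$, $L^*d\tilde y=d\tilde x$ and $L^*d\tilde z=f\,d\tilde z$. At the origin $f=0$ and $L(0)\sim J_0$ has a one-dimensional $\ker L^*$, which would have to contain both $d\tilde x$ and $d\tilde z$. Equivalently, these conditions would produce $\operatorname{diag}\bigl(\bigl(\begin{smallmatrix}0&0\\1&0\end{smallmatrix}\bigr),f\bigr)$, which is not $\operatorname{gl}$-regular on $\{f=0\}$.

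So the obstacle you flag is not merely technical. The splitting into $\ker L^2$ and the $f$-eigendirection genuinely degenerates at $f=0$: in the model, the $f$-eigenvector $\partial_{\tilde x}+f\partial_{\tilde y}+f^2\partial_{\tilde z}$ falls into $\ker L$. The normal form is not adapted to that splitting.

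Two further points are only asserted. You give no proof that an analytic $\tilde z$ with $\tilde z^d=\varepsilon f$ and $d\tilde z(0)\neq 0$ exists; a function vanishing to order $d$ along one line need not be a $d$-th power of a regular function. The closedness of the covector fields is also left as ``I would check''.

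A route that works is as follows. The reduced system has the implicit solution $f=f_0(w)$ with $w=z+yf+xf^2$. By the implicit function theorem $w$ is analytic with $dw(0)=dz$, and one checks $L^*dw=f\,dw$. Hence $\tilde z=\phi(w)$, with $\varepsilon\phi^d=f_0$ and $\phi'(0)=|c|^{1/d}\neq0$, is a genuine coordinate satisfying $L^*d\tilde z=\varepsilon\tilde z^d\,d\tilde z$.

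Then impose the correct chain $L^*d\tilde y=d\tilde z$ and $L^*d\tilde x=d\tilde y$. Work in coordinates $(x,y,w)$ with $F=f_0(w)$ and $A=1-(y+2xF)F'$. There $L^*dx=F\,dx+dy$, $L^*dy=dz=-F^2dx-F\,dy+A\,dw$ and $L^*dw=F\,dw$. Set $V_1=\partial_x-F\partial_y$ and $V_2=A\partial_y+F\partial_w$. The chain becomes $V_1\tilde y=0$, $V_2\tilde y=\phi'(w)$, and then $V_1\tilde x=\tilde y_y$, $V_2\tilde x=\tilde y_w$.

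One checks that $[V_1,V_2]=0$ and that the compatibility conditions hold (for instance $V_1\tilde y_w=F'\tilde y_y=V_2\tilde y_y$), so both systems are solvable by Frobenius. At the origin, the Jacobian of $(\tilde x,\tilde y,\tilde z)$ with respect to $(x,y,w)$ is triangular with diagonal entries $\phi'(0)\neq0$. In these coordinates the matrix of $L$ is exactly the displayed one.
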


\begin{Proposition}
    Let $L$ be a real-analytic three-dimensional $\operatorname{gl}$-regular Nijenhuis operator such that $\det L \equiv 0$, while the other two invariants are functionally independent in a neighbourhood of the origin.
    Then, in suitable coordinates $(\tilde{x}, \tilde{y}, \tilde{z})$, the operator $L$ is given by the matrix
    \[
    \begin{pmatrix}
        \tilde{x} & 1 & 0 \\
        \tilde{y} & 0 & 0 \\
        1 & 0 & 0
    \end{pmatrix}.
    \]
\end{Proposition}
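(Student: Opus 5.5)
The plan is to build the new coordinates directly from the invariants, $\tilde x=f$ and $\tilde y=g$, together with a third function $\tilde z$ chosen so that the dual operator $L^*$ acts on the coordinate differentials exactly as the target matrix prescribes.

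First compute the characteristic polynomial of the target matrix $M$:
\[
\det(t\operatorname{Id}-M)=t\bigl((t-\tilde x)t-\tilde y\bigr)=t^3-\tilde x t^2-\tilde y t .
\]
So in the new coordinates we need $\sigma_1=\tilde x$, $\sigma_2=\tilde y$, $\sigma_3=0$. Reading $M$ row by row, the requirement on $L^*$ is
\[
L^*d\tilde x=\tilde x\,d\tilde x+d\tilde y,\qquad L^*d\tilde y=\tilde y\,d\tilde x,\qquad L^*d\tilde z=d\tilde x .
\]
We work in the coordinates $(x,y,z)$ of the paper, where $L=L_{\operatorname{comp1}}$ and $h\equiv 0$. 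The relation $JL=L_{\operatorname{comp1}}J$ from \eqref{ngi form} gives $L^*df=f\,df+dg$ and $L^*dg=g\,df+dh=g\,df$. Hence the first two requirements hold automatically for $\tilde x=f$, $\tilde y=g$, and these two functions are independent by hypothesis. It remains to find $\tilde z$ with $L^*d\tilde z=df$ and to check that $(f,g,\tilde z)$ is a coordinate system.

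\textbf{Construction of $\tilde z$.} In the coordinates $(x,y,z)$ the operator $L^*$ acts by
\[
L^*dx=f\,dx+dy,\qquad L^*dy=g\,dx+dz,\qquad L^*dz=h\,dx=0 .
\]
Write $\alpha=a\,dx+b\,dy+c\,dz$. Then $L^*\alpha=(af+bg)\,dx+a\,dy+b\,dz$. Comparing with $df$ forces $a=f_y$ and $b=f_z$. The remaining condition $f_yf+f_zg=f_x$ holds by \eqref{eq:NijenhuisCond}, since $f_x=ff_y+g_y$ and $g_y=gf_z$. The coefficient $c$ is free, because $dz$ spans $\ker L^*$.

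We now need to choose $c$ so that $\alpha=f_y\,dx+f_z\,dy+c\,dz$ is closed. This requires three things:
\begin{itemize}
\item the compatibility $f_{yy}=f_{xz}$;
\item the equations $c_x=f_{yz}$ and $c_y=f_{zz}$;
\item the integrability of this pair, which again reduces to $\partial_z(f_{yy}-f_{xz})=0$.
\end{itemize}
So the key identity is $f_{yy}=f_{xz}$, and I expect this to be the main (though computational) obstacle.

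To verify it, differentiate $f_x=ff_y+gf_z$ in $z$:
\[
f_{xz}=f_zf_y+ff_{yz}+g_zf_z+gf_{zz}.
\]
Differentiate $f_y=ff_z+g_z$ in $y$, and use $g_{zy}=(gf_z)_z=g_zf_z+gf_{zz}$:
\[
f_{yy}=f_yf_z+ff_{zy}+g_zf_z+gf_{zz}.
\]
The two expressions agree. Then $c$ exists by the Poincaré lemma on a small ball, and $\tilde z$ exists with $d\tilde z=\alpha$, so $L^*d\tilde z=df$. Everything is real-analytic.

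\textbf{Independence.} Suppose, at some point near the origin, $d\tilde z=a\,df+b\,dg$. Apply $L^*$ and evaluate at the origin, where $f=g=0$, so that $L^*df=dg$ and $L^*dg=0$. We would obtain $df=a\,dg$, which contradicts the independence of $df$ and $dg$. Hence $df,dg,d\tilde z$ are independent at the origin, and therefore in a neighbourhood of it. So $(\tilde x,\tilde y,\tilde z)=(f,g,\tilde z)$ are local coordinates. Writing $L$ in them via the three relations for $L^*$ above yields exactly the matrix in the statement.
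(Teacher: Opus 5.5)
Your proof is correct and complete. There is nothing to compare it against step by step: the paper gives no proof of this proposition and lists it among the degenerate cases stated ``without proof, as they follow directly from the analysis of system \eqref{eq:NijenhuisCond}''.

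The closest template in the paper is the proof of the Theorem. There the invariants themselves serve as new coordinates, and $L$ is recovered from \eqref{reconst form}. That mechanism is unavailable here, because $h\equiv 0$ makes the Jacobian of $(f,g,h)$ degenerate everywhere, so the invariants alone cannot determine $L$. Your third relation $L^*d\tilde z=df$ is exactly the extra datum that replaces it. Your construction of $\tilde z$ through a closed $1$-form uses only the equations of \eqref{eq:NijenhuisCond}, which fits the paper's remark.

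A few small refinements:
\begin{itemize}
\item The key identity $f_{xz}=f_{yy}$ needs less than you use. Differentiate $f_x=ff_y+g_y$ in $z$ and $f_y=ff_z+g_z$ in $y$; the two results agree without invoking $h\equiv0$.
\item In the independence step, assume the dependence at the origin itself, not at ``some point near the origin''. The argument genuinely needs $g(0)=0$: wherever $g\neq0$, the function $\ln|g|$ also satisfies $L^*d(\ln|g|)=df$, yet it is functionally dependent on $g$.
\item Concretely, at the origin $df=f_y\,dy+f_z\,dz$ and $dg=g_z\,dz$, with $f_y(0)=g_z(0)\neq0$. Meanwhile $d\tilde z$ has $dx$-component $f_y(0)$, so the determinant of $(df,dg,d\tilde z)$ at the origin is $g_z(0)^3\neq0$.
\item The hypothesis of functional independence must be read as $df\wedge dg\neq0$ at the origin. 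The conclusion forces this anyway, since $f=\tilde x$ and $g=\tilde y$ in the normal form.
\end{itemize}
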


It is known that if a real-analytic $\operatorname{gl}$-regular Nijenhuis operator $L$ satisfies $d(\det L) \neq 0$, then $L$ is a differentially nondegenerate Nijenhuis operator (see definition in \cite{bolsinov2022nijenhuisNGI}). In dimension three, the nondegeneracy condition on the determinant, in terms of the initial data for system \eqref{eq:NijenhuisCond}, reads $h_0'(0)\neq 0$.

Then a natural question arises: what happens if $h_0(z)$ has a zero of high order at $z=0$? In this connection, in \cite{bkm} the following conjecture on the normal form of Nijenhuis operators was presented.

\textbf{Bolsinov's conjecture.}
\textit{Let $L$ be a real-analytic three-dimensional $\operatorname{gl}$-regular Nijenhuis operator given in the form $L_{\operatorname{comp1}}$, and let $f(x,y,z)$, $g(x,y,z)$ and $h(x,y,z)$ be its invariants defined by the characteristic polynomial \eqref{eq: char pol}, with $f(0,0,0) = g(0,0,0) = h(0,0,0) = 0$. 
    Assume that }
    $$
    \begin{cases}
      f(0,0,z) = f_0(z) \quad \text{an arbitrary analytic function,} \\
      g(0,0,z) = g_0(z) = z \hat{g}(z), \ \hat{g}(0) \neq 0, \\
      h(0,0,z) = h_0(z) = z^k \hat{h}(z), \ k \geq 2, \ \hat{h}(0) \neq 0. 
    \end{cases}
    $$
  \textit{Then there exists a coordinate system $(\tilde{x},\tilde{y},\tilde{z})$ in which } $ f = \tilde{x}$, $g = \tilde{z}(\alpha + \tilde{y})$, $ h = \varepsilon \tilde{z}^k$, where $\alpha = \frac{\hat{g}(0)}{|\hat{h}(0)|^{1/k}} \neq 0$ and $\varepsilon = \operatorname{sgn}(\hat{h}(0))$ \textit{and the operator $L$ is given by the matrix}
    \begin{equation}
        L = \begin{pmatrix}
            \tilde{x} & \tilde{z} & \alpha +\tilde{y} \\[2pt]
            \dfrac{k-1}{k}(\alpha +\tilde{y}) & 0 & \varepsilon k\tilde{z}^{k-2} \\[6pt]
            \dfrac{\tilde{z}}{k} & 0 & 0
        \end{pmatrix}.
    \end{equation}

\begin{Theorem}
    Bolsinov's conjecture is true.
\end{Theorem}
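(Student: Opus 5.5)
The plan is to build the new coordinates directly out of the invariants $f,g,h$, and then to recover $L$ from formula \eqref{reconst form}. Concretely, I would set $\tilde x=f$, take $\tilde z$ to be an analytic $k$-th root of $\varepsilon h$, and take $\tilde y=g/\tilde z-\alpha$. For this to make sense I must show three things: $h$ factors as $\varepsilon\tilde z^k$ with $\tilde z$ analytic, $g$ is divisible by $\tilde z$, and $(\tilde x,\tilde y,\tilde z)$ is a local coordinate system.

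\emph{Step 1: structure of $h$.} In \eqref{eq:NijenhuisCond} the equations for $h$ are $h_x=hf_y$ and $h_y=hf_z$. For fixed $z$ these are linear homogeneous in $h$, so I would integrate them in $(x,y)$ from $(0,0)$. The compatibility condition is $\partial_y f_y=\partial_x f_z$. It should follow from the compatibility of the whole system, or directly from the $f$-equations by cross-differentiating $f_x=ff_y+g_y$ and $f_y=ff_z+g_z$. Integrating gives
\[
h(x,y,z)=h_0(z)\,e^{\Phi(x,y,z)},\qquad \Phi(0,0,z)=0,\qquad d_{x,y}\Phi=f_y\,dx+f_z\,dy .
\]
Hence $h=\varepsilon z^k|\hat h(z)|e^{\Phi}$. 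I then put $\tilde z=z\,u$ with $u=(|\hat h(z)|e^{\Phi})^{1/k}$. This $u$ is an analytic unit with $u(0)=|\hat h(0)|^{1/k}$, and $h=\varepsilon\tilde z^k$ holds identically.

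\emph{Step 2: divisibility of $g$.} On the plane $z=0$ we have $h\equiv0$ and $h_z\equiv0$; the second uses $k\ge2$ together with the formula from Step 1. So the equations $g_x=gf_y+hf_z$ and $g_y=gf_z+h_z$, restricted to $z=0$, become linear homogeneous ODEs for $g(x,y,0)$. Since $g(0,0,0)=0$, this forces $g(x,y,0)\equiv0$. Therefore $g=zG$ with $G$ analytic and $G(0,0,0)=\hat g(0)$. I then define $\tilde y=G/u-\alpha$, so that $\tilde y(0)=0$ by the choice $\alpha=\hat g(0)/|\hat h(0)|^{1/k}$, and $g=\tilde z(\alpha+\tilde y)$.

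\emph{Step 3: nondegeneracy.} At the origin $d\tilde z=u(0)\,dz$, so it suffices to check that the $(x,y)$-block of $(f,\tilde y)$ is nondegenerate there. From the system we get $f_y(0)=g_z(0)=\hat g(0)\neq0$ and $f_x(0)=g_y(0)=h_z(0)=0$. Dividing $g_x=gf_y+hf_z$ by $z$ gives $G_x=Gf_y+(h/z)f_z$, whence $G_x(0)=\hat g(0)^2\neq0$. So the $2\times2$ determinant equals $-f_y\tilde y_x=-\hat g(0)^3/u(0)\neq0$, and $(\tilde x,\tilde y,\tilde z)$ are local coordinates.

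\emph{Step 4: computing $L$.} In the new coordinates the invariants are $f=\tilde x$, $g=\tilde z(\alpha+\tilde y)$ and $h=\varepsilon\tilde z^k$. Their Jacobian $J$ is invertible wherever $\tilde z\neq0$, so on that dense set $L=J^{-1}L_{\operatorname{comp1}}J$ by \eqref{reconst form}. I would compute this product explicitly, expecting to obtain the matrix in the statement, and then conclude that the identity holds everywhere by continuity and analyticity.

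The main obstacle I anticipate is Step 1, specifically justifying that $\Phi$ exists analytically in a full neighbourhood; this amounts to the compatibility $\partial_yf_y=\partial_xf_z$. The final matrix computation in Step 4 is routine but must be done carefully to match the stated entries.
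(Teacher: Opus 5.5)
Your proposal is correct and uses the same coordinates as the paper: $\tilde x=f$, $\tilde z=zu$ with $u$ the analytic $k$-th root of $|\hat h(z)|e^{\Phi}$ (the paper's $H$), $\tilde y=g/\tilde z-\alpha$, followed by the reconstruction formula \eqref{reconst form}. The genuine difference is Step 2. The paper shows that every Taylor coefficient $g_{ij}(z)$ of $g$ in $(x,y)$ vanishes at $z=0$. It does this by an induction that expresses $\partial_x^i\partial_y^j g|_{x=y=0}$ through $g$, $g_x$, $g_y$ and mixed derivatives of $h$. You get the equivalent statement $g(x,y,0)\equiv0$ at once. For $k\ge2$, $h$ and $h_z$ vanish identically on $\{z=0\}$, so the $g$-equations restricted to that plane form a closed linear homogeneous system. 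Since $g(0,0,0)=0$, its solution is zero; indeed $g(x,y,0)e^{-\Phi(x,y,0)}$ is constant. This is much shorter and avoids the bookkeeping.

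Your Step 1 also makes explicit what the paper only asserts, and the obstacle you anticipate there is not real. Differentiating $f_x=ff_y+g_y$ in $z$ and $f_y=ff_z+g_z$ in $y$ gives $f_{xz}=f_yf_z+ff_{yz}+g_{yz}=f_{yy}$. So $f_y\,dx+f_z\,dy$ is closed in $(x,y)$, and $\Phi$ is obtained analytically by integrating along rays from $(0,0)$.

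There is one computational slip, in Step 3. Your $u$ depends on $x$, with $u_x=u\Phi_x/k=uf_y/k$, so $u_x(0)=u(0)\hat g(0)/k\neq0$ and
\[
\tilde y_x(0)=\frac{G_x(0)}{u(0)}-\frac{G(0)\,u_x(0)}{u(0)^2}=\Bigl(1-\frac1k\Bigr)\frac{\hat g(0)^2}{u(0)}.
\]
This is the paper's value $\alpha\hat g(0)(1-1/k)$, not $\hat g(0)^2/u(0)$. The $2\times2$ determinant is therefore $-(1-1/k)\hat g(0)^3/u(0)$. It is still nonzero because $k\ge2$, so your conclusion survives once corrected.

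Step 4 does produce the stated matrix. Take $J$ with rows $(1,0,0)$, $(0,\tilde z,\alpha+\tilde y)$, $(0,0,\varepsilon k\tilde z^{k-1})$. Then $J^{-1}L_{\operatorname{comp1}}J$ has entries such as $\varepsilon\tilde z^{k}/(\varepsilon k\tilde z^{k-1})=\tilde z/k$ and $(1-1/k)(\alpha+\tilde y)$, all analytic for $k\ge2$. So your extension across $\tilde z=0$ by continuity is legitimate.
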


\begin{proof}[Sketch of proof.]
From the original system \eqref{eq:NijenhuisCond} it follows that $h = z^k \hat{h}(z) e^{\psi(x,y,z)}$, where $\psi = \psi(x,y,z)$ is a real-analytic function and $\psi(0,0,0) = 0$.

Then we can define a new coordinate $\tilde z := (\varepsilon h)^{1/k} = z \cdot e^{\frac{\psi(x,y,z)}{k}}(\varepsilon \hat{h})^{\frac{1}{k}} = z \cdot H(x,y,z)$, with $H(0,0,0) = |\hat{h}(0)|^{\frac{1}{k}} \neq 0$.

Since $f_y = f f_z + g_z$, and hence $f_y(0,0,0) = g_z(0,0,0) \neq 0$, we take as another new coordinate $\tilde x = f(x,y,z)$.

Now observe that all Taylor coefficients \(g_{ij}(z)\) of the function $g$ are divisible by \(z\), i.e.
\[
g_{ij}(z)=z\hat g_{ij}(z).
\]
This is a technical part of the proof and follows directly from the analysis of system \eqref{eq:NijenhuisCond}. More precisely, by induction one can prove that each derivative of the function \(g\) has the form
\[
i!\,j!\,g_{ij}(z)
=
\frac{\partial^{i+j}g}{\partial x^i\partial y^j}\bigg|_{x=y=0}
=
gP_1^{ij}
+
g_xP_2^{ij}
+
g_yP_3^{ij}
+
Q_{ij}P_4^{ij},
\]
where \(P_1^{ij},P_2^{ij},P_3^{ij},P_4^{ij}\) are polynomials in \(g,f,h\) and their derivatives, and \(Q_{ij}\) is a linear combination of derivatives of the function $h$ of the form $\partial_x^a\partial_y^b\partial_z^c h$,
where \(a,b\ge 1\), \(a+b+c\le i+j\) and \(c=0,1\).
Consequently, each coefficient $g_{ij}(z)$ vanishes at \(z=0\).
Then the function $g$ can be written as
$$
g = g_0(z) + z G(x,y,z) = z\left(\hat{g}(z) + G(x,y,z)\right), \quad \hat{g}(0) \neq 0.
$$
Define the coordinate 
$$\tilde{y} = \frac{g - \alpha z H}{z H} = \frac{\hat{g} + G - \alpha H}{H}.$$  
Note that
$$\frac{\partial \tilde{y}}{\partial x}(0,0,0) = \frac{\hat{g}^2(0)}{|\hat{h}(0)|^{1/k}}\left(1 - \frac{1}{k} \right) = \alpha \hat{g}(0)\left(1 - \frac{1}{k} \right) \neq 0.
$$

It remains to check that $(\tilde{x}, \tilde{y}, \tilde{z})$ give a coordinate system centered at the origin. Indeed,
$$
\begin{cases}
    \tilde{x}(0,0,0) = f(0,0,0) = 0, \\
    \tilde{y}(0,0,0) = \dfrac{\hat{g}(0) - \alpha H(0,0,0)}{H(0,0,0)} = \dfrac{\hat{g}(0) - \frac{\hat{g}(0)}{|\hat{h}(0)|^{1/k}} |\hat{h}(0)|^{1/k}}{ |\hat{h}(0)|^{1/k}} = 0, \\
    \tilde{z}(0,0,0) = (\varepsilon h(0,0,0))^{1/k} = 0,
\end{cases}
$$
and the Jacobian matrix of the change of variables at the origin is
$$
J(0,0,0) = \begin{pmatrix}
    0 & \hat{g}(0) & \star \\
   \alpha \hat{g}(0)\left(1 - \frac{1}{k} \right) & \star & \star \\
    0 & 0 & |\hat{h}(0)|^{1/k}
\end{pmatrix}, \quad |J(0,0,0)| = -\alpha \hat{g}^2(0) |\hat{h}(0)|^{1/k} \left(1 - \frac{1}{k} \right) \neq 0.
$$

In the new coordinates we have $f=\tilde x,
g=\tilde z(\alpha+\tilde y),
h=\varepsilon\tilde z^k.$ Then, from formula \eqref{reconst form}, it follows that in the coordinates $(\tilde x,\tilde y,\tilde z)$ the operator \(L\) takes the required form.
\end{proof}

\section*{Acknowledgements}
The authors express their gratitude to A. Bolsinov for his attention to the text of the work and to Steven Dickerson for the constructed examples. The project was initiated during the workshop "Integrable systems, Liouville foliations and Nijenhuis geometry" at the CIMAT mathematical centre in Guanajuato.

\section*{Funding}
The research was supported by the grant of the Ministry of Science and Higher Education of the Republic of Kazakhstan (grant No. AP23483476) and also by DFG 529233771. M.M. was supported by the FWO-EoS grant through the project \textit{Beyond symplectic geometry} with AU 45816.

\end{document}